\newtheorem{definition}{Definition}
\newtheorem{lemma}{Lemma}
\newtheorem{theorem}{Theorem}
\newtheorem{remark}{Remark}
\newtheorem{assumption}{Assumption}
\newcommand{\N}{\mathbb N}
\newcommand{\R}{\mathbb R}
\newcommand{\Z}{\mathbb Z}
\newcommand{\cE}{\mathcal E}
\newcommand{\cF}{\mathcal F}
\newcommand{\cL}{\mathcal L}
\title{Stationary measures for the Porous Medium Model}
\author{Oriane Blondel}
\begin{document}
\maketitle
\begin{abstract}{We study the stationary measures for variants of the Porous Medium Model in dimension 1. These are exclusion processes that belong to the class of kinetically constrained models, in which an exchange can occur between $x$ and $x+1$ only if there is a particle either at $x-1$ or $x+2$. We show that any stationary probability measure can be decomposed into a frozen part and a mixture of product measures (although there exist invariant sets which have zero probability under these measures). The proof adapts entropy arguments from \cite{holley-stroock,liggett}.}\end{abstract}

\section{Introduction}

The Porous Medium Model (PMM), is the process on $\Omega:=\{0,1\}^\Z$ in which a particle jumps between $x$ and $x+1$ or $x+1$ and $x$ at rate $\eta(x-1)+\eta(x+2)$. In particular, isolated particles (at distance at least $3$ from any other) are frozen, and this means the process admits infinitely stationary measures. 
This model has been introduced in \cite{bertini-toninelli}. It takes its name from the fact (established in \cite{GLT}) that in the hydrodynamic limit, for initial densities bounded away from $0$ and $1$, the density of the system converges to the solution of the Porous Medium Equation with $m=2$:
\begin{equation}
 \partial _t\rho=\Delta(\rho^2).
\end{equation}

We are interested in the study of the stationary probability measures for this model. Apart from this being a very natural question, such knowledge can be instrumental in the study of the hydrodynamic limit of the system \cite{Funaki,stefan}. 

It is well known that for any $\rho\in[0,1]$, $\mu_\rho:=\underset{x\in\Z}{\otimes}\mathrm{Ber}(\rho)$ is a reversible measure for this dynamics. We also noticed already that there are infinitely stationary measures concentrated on frozen configurations. Our main result is that, although there exist invariant sets that contain no frozen configuration and have $\mu_\rho$--probability $0$ for any $\rho$ (see \eqref{e:defF'}--\eqref{e:defE'}), the only extremal stationary probability measures are either concentrated on frozen configurations, or product.

The strategy of the proof is to use entropy arguments introduced in \cite{holley-stroock} to argue that any stationary measure concentrated on non-frozen configurations is in fact reversible w.r.t.\@ allowed jumps. That is the content of Lemma~\ref{l:exchangeinvariant}, which is proved in Section~\ref{s:Lemma}. The rest of the proof consists in exploiting this result to show that any putative stationary measure gives the same probability to certain sets of sub-configurations. That is the content of Section~\ref{s:proofTheorem}. In turn, these constructions rely on the identification of allowed transformations of configurations, collected in Section~\ref{s:Facts}.

\section{Model and result}
\subsection{Notations and definitions}
We consider interacting particle systems with state space $\Omega:=\{0,1\}^\Z$. As usual, for $\eta\in\Omega$, we say there is a particle at $x$ if $\eta(x)=1$, and that $x$ is empty if $\eta(x)=0$. We will also need to consider configurations restricted to subsets of $\Z$. For $\Lambda$ a subset of $\Z$, we define $\Omega_\Lambda=\{0,1\}^\Lambda$. For $\eta\in\Omega$, $\eta_{|\Lambda}$ is the element of $\Omega_\Lambda$ such that for all $x\in\Lambda$, $\eta_{|\Lambda}(x)=\eta(x)$.  Recall that a function $f:\Omega\rightarrow\R$ is \emph{local} if there exists $\Lambda$ finite subset of $\Z$ such that: $\forall \eta,\eta'\in\Omega$, $\eta_{|\Lambda}=\eta'_{|\Lambda}\Rightarrow f(\eta)=f(\eta')$. The 
support of a local function is the smallest such $\Lambda$. For $\sigma\in\Omega_\Lambda$ and $\nu$ a measure on $\Omega$, we denote abusively $\mathbf{1}_\sigma$ the indicator function of $\{\eta\in\Omega\ \colon\ \eta_{|\Lambda}=\sigma\}$ and we write $\nu(\sigma)=\nu(\mathbf{1}_\sigma)$. If a function $f$ has support in $\Lambda$ and $\sigma\in\Omega_\Lambda$, we write $f(\sigma)$ for the common value of $f(\eta)$, with $\eta\in\Omega$ such that $\eta_{|\Lambda}=\sigma$. Also, for $\sigma\in\Omega_\Lambda$, we write $|\sigma|=\sum_{x\in\Lambda}\sigma(x)$. For $n\in\N$, $\Lambda_n:=[-n,n]\cap\Z$. For $\rho\in (0,1)$, $\mu_\rho=\underset{x\in\Z}{\otimes}\mathrm{Ber}(\rho)$ is the product probability measure on $\Omega$ with homogeneous density $\rho$.

\bigskip

To define the models we consider, we consider a family of constraints $(c_x(\eta))_{x\in\Z,\eta\in\Omega}$ that satisfies the following assumption.
\begin{assumption}\label{ass}The family $(c_x(\eta))_{x\in\Z,\eta\in\Omega}$ takes values in $\R_+$ and has the following properties:
	\begin{itemize}
	 \item translation invariance: for any $x\in\Z$, $\eta\in\Omega$, $c_x(\eta)=c_0(\eta(x+\cdot))$; 
	 \item $c_0$ is a local function;
	 \item for all $\eta\in\Omega$, $c_0(\eta)=c_0(\eta^{0,1})$;
	 \item for all $\eta\in\Omega$, $c_0(\eta)>0$ iff $\eta(-1)+\eta(2)>0$.
	\end{itemize}
	\end{assumption}

For $f$ local function, $\eta\in\Omega$, let us define
\begin{equation}\label{e:defL}
 \cL f(\eta)=\sum_{x\in\Z}c_x(\eta)\left[f(\eta^{x,x+1})-f(\eta)\right],
\end{equation}
where $\eta^{x,x+1}(y)=\eta(y)$ if $y\notin\{x,x+1\}$, $\eta^{x,x+1}(x)=\eta(x+1), $ $\eta^{x,x+1}(x+1)=\eta(x)$.

Let us introduce some additional terminology. 

\begin{definition} Let $\Lambda$ be an interval of $\Z$. 
 \begin{enumerate}
  \item  For $\eta\in\Omega$, the transformation $\eta\mapsto\eta^{x,x+1}$ is an \emph{allowed jump} if $c_x(\eta)>0$.
  \item For $\{x,x+1\}\subset\Lambda$, $\sigma\in\Omega_\Lambda$, $\sigma\mapsto\sigma^{x,x+1}$ is an \emph{allowed jump inside $\Lambda$} if $c_x(0\cdot\sigma)>0$, where $0\cdot\sigma$ is the configuration equal to $0$ on $\Lambda^c$ and to $\sigma$ on $\Lambda$.
  \item For $\eta\in\Omega$, $x\in\Z$ such that $\eta(x)=1$, the particle at $x$ is \emph{active} if $c_x(\eta)+c_{x-1}(\eta)>0$ (equivalently, if there is another particle within distance $2$ of $x$).
  \item A pair of particles within distance $2$ of each other an \emph{mobile cluster}.
  \item A configuration without active particles is called \emph{frozen}.
 \end{enumerate}

\end{definition}
The reason for the name ``mobile cluster'' is that (as one can easily check), such a pair can move through space autonomously: no matter what the rest of the configuration is, for any target pair of neighbor or next-to-neighbor positions, there exists a sequence of allowed jumps that transports the pair from its initial position to the target (see Figure~\ref{f:mobile-cluster}). Note that the notions in this definition do not depend on the specific choice of the constraints satisfying the last condition in Assumption~\ref{ass}.

\begin{figure}
 \begin{center}
\begin{tikzpicture}[scale=0.65]
			\draw[thick] (-9.5,0) -- (-5.5,0);
			\foreach \x in {-9,...,-6}
			\draw (\x, -0.1) -- (\x,0.1);
			\fill (-9,0.6) circle(0.35);
			\draw[stealth-stealth, thick] (-9,1) to[bend left] (-8,1);
			\fill (-7,0.6) circle(0.35);
			
			\draw[thick] (-4.5,0) -- (-0.5,0);
			\foreach \x in {-4,...,-1}
			\draw (\x, -0.1) -- (\x,0.1);
			\fill (-3,0.6) circle(0.35);
			\fill (-2,0.6) circle(0.35);
			\draw[stealth-stealth, thick] (-2,1) to[bend left] (-1,1);
			
			\draw[thick] (1.5,0) -- (5.5,0);
			\foreach \x in {2,...,5}
			\draw (\x, -0.1) -- (\x,0.1);
			\fill (3,0.6) circle(0.35);
			\fill (5,0.6) circle(0.35);
			\draw[ stealth-stealth, thick] (3,1) to[bend left] (4,1);
			
			\draw[thick] (6.5,0) -- (10.5,0);
			\foreach \x in {7,...,10}
			\draw (\x, -0.1) -- (\x,0.1);
			\fill (9,0.6) circle(0.35);
			\fill (10,0.6) circle(0.35);
		
%
%
			
			\begin{scope}[ yshift=-3.3cm] 
			
			\draw[thick] (-11.5,0) -- (-7.5,0);
			\foreach \x in {-11,...,-8}
			\draw (\x, -0.1) -- (\x,0.1);
			\fill (-11,0.6) circle(0.35);
						\fill (-10,0.6) circle(0.25);
			\draw[stealth-stealth,thick] (-11,1) to[bend left] (-10,1);
			\fill (-9,0.6) circle(0.35);
			
			\draw[thick] (-6.5,0) -- (-2.5,0);
			\foreach \x in {-6,...,-3}
			\draw (\x, -0.1) -- (\x,0.1);
			\fill (-5,0.6) circle(0.35);
									\fill (-6,0.6) circle(0.25);
			\fill (-4,0.6) circle(0.35);
			\draw[stealth-stealth, thick] (-4,1) to[bend left] (-3,1);
			
			\draw[thick] (-1.5,0) -- (3.5,0);
			\foreach \x in {0,...,3}
			\draw (\x, -0.1) -- (\x,0.1);
			\fill (0,0.6) circle(0.25);
			\fill (1,0.6) circle(0.35);
			\fill (3,0.6) circle(0.35);
			\draw[stealth-stealth, thick] (1,1) to[bend left] (2,1);
			
			\draw[thick] (4.5,0) -- (8.5,0);
			\foreach \x in {5,...,8}
			\draw (\x, -0.1) -- (\x,0.1);
			\fill (7,0.6) circle(0.35);
			\fill (8,0.6) circle(0.35);
			\fill (5,0.6) circle(0.25);
			\draw[stealth-stealth, thick] (5,1) to[bend left] (6,1);
			
			\draw[thick] (9.5,0) -- (13.5,0);
			\foreach \x in {10,...,13}
			\draw (\x, -0.1) -- (\x,0.1);
			\fill (12,0.6) circle(0.35);
			\fill (13,0.6) circle(0.35);
			\fill (11,0.6) circle(0.25);
%
%
			\end{scope}
		\end{tikzpicture}  \end{center}
 \caption{On the first line, a strategy for moving a mobile cluster toward the right. The rest of the configuration is represented as empty but is in fact irrelevant. On the second line, steps 2--5, the beginning of the procedure that can be used to transport to the right an extra particle (depicted with smaller size) with a mobile cluster. The first step shows that we can assume the extra particle is not inside the mobile cluster.}\label{f:mobile-cluster}
\end{figure}
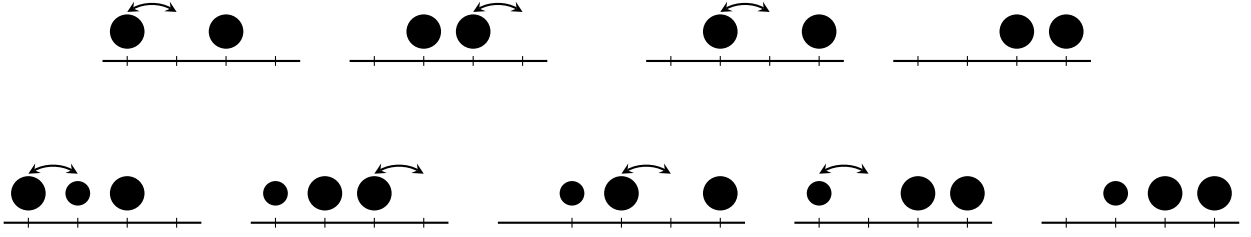

\bigskip

We are interested in the stationary measures of the process defined by~\eqref{e:defL}, that is in the set of probability measures $\nu$ on $\Omega$ such that $\nu(\cL f)=0$ for all local functions $f$. Note that the third condition in Assumption~\ref{ass} ensures that this process is reversible w.r.t.\@ $\mu_\rho$ for any $\rho\in (0,1)$.

	We first identify the invariant sets of the dynamics defined by $\eqref{e:defL}$. Notice that the number of particles is preserved by this dynamics. We define $\cF$ the set of frozen configurations, $\cF'$ the set of non-frozen configurations with finitely many particles, $\cF''$ the set of non-frozen configurations with finitely many holes, $\cE'$ the set of configurations with finitely many active particles that are in no other set. Finally, $\cE$ is what we call the ergodic set of configurations and is the complement of the previous sets. It is also a set with full measure under $\mu_\rho$, $\rho\in (0,1)$.
\begin{align}
	\mathcal{F} &= \{\eta\in\Omega\ \colon \ \forall x\in\Z,\ \eta(x)=1\Rightarrow \eta(x+1)+\eta(x+2)=0\},\label{e:defF}\\
	\cF'&=\cup_{k=2}^\infty\cF'_k, \text{ where }\mathcal{F}'_k=\{\eta\in\mathcal{F}^c\ \colon \  \sum_{x\in\Z}
	\eta(x)=k\}\ , \label{e:defF'}\\
	\cF''&=\cup_{k=0}^\infty\cF''_k,\text{ where }\cF''_k=\{\eta\in\cF^c\ \colon\ \sum_{x\in\Z}(1-\eta(x))=k\}\ ,\label{e:defF''} \\
	\cE'&=\{\eta\in\Omega\setminus\cF'\ \colon\ 0<\sum_{x\in\Z}(\eta(x)\eta(x+1)+\eta(x)\eta(x+2))<\infty\},\label{e:defE'}\\
	\cE&=\Omega\setminus(\cF\cup\cF'\cup\cF''\cup\cE').\label{e:defE}
	\end{align}
 We will also consider $G_\Lambda$ the set of ``good'' configurations that contain at least one mobile cluster in $\Lambda$, i.e.
 \begin{equation}
  G_\Lambda=\{\sigma\in\Omega_{\Lambda}\ \colon\ \exists \{x,y\}\subset\Lambda \text{ s.t. }|x-y|\in\{1,2\}\text{ and } \eta(x)+\eta(y)=2\}.
 \end{equation}
$G_\Lambda$ is invariant under allowed jumps inside $\Lambda$ (recall that we assume empty boundary condition). Finally, for $n\in\N^*$, let $G_n=G_{\Lambda_n}$.

	\subsection{Main result}

	Recall that $\mu_\rho$ is the product measure on $\Omega$ with density $\rho$. We aim to prove the following. 

\begin{theorem}\label{t:invariant}
	Under Assumption~\ref{ass}, any stationary measure $\nu$ for the process with generator $\cL$ can be decomposed into
	\begin{equation}\label{e:decomposition}
	\nu=\alpha_\cF\nu_{\cF}+\alpha_\cE\nu_\cE,
	\end{equation}
	with $\nu_X$ a stationary measure such that $\nu_X(X)=1$, $\alpha_X=\nu(X)$ for $X\in\{\cF,\cE\}$. Moreover, if $\nu(\cE)>0$, then there exists $\lambda$ a probability measure on $(0,1)$ such that 
	\begin{equation}\label{e:decompnuE}\nu_\cE=\int_0^1\mu_\rho d\lambda(\rho).\end{equation}

\end{theorem}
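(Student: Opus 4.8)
The plan is to upgrade the reversibility statement of Lemma~\ref{l:exchangeinvariant} to an exchangeability property of the non‑frozen part of $\nu$ and then invoke de Finetti's theorem; the combinatorial facts of Section~\ref{s:Facts} (irreducibility of the constrained exchange on a box carrying a mobile cluster, and transport of a mobile cluster — dragging an extra particle if needed — arbitrarily far) are the bridge. I would start from the partition $\Omega=\cF\sqcup\cF'\sqcup\cF''\sqcup\cE'\sqcup\cE$ into invariant sets, which gives $\nu=\sum_X\nu(X)\,\nu(\cdot\mid X)$ with each $\nu(\cdot\mid X)$ stationary. On $\cF$ every configuration is absorbing, so $\nu_\cF:=\nu(\cdot\mid\cF)$ is stationary with $\nu_\cF(\cF)=1$ (when $\nu(\cF)>0$); it then suffices to show $\nu(\cF')=\nu(\cF'')=\nu(\cE')=0$ and to identify $\nu_\cE:=\nu(\cdot\mid\cE)$.

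For the auxiliary sets: if $\nu(\cF'_k)>0$ for some $k\ge 2$, then $\mu:=\nu(\cdot\mid\cF'_k)$ is stationary and carried by non‑frozen configurations, so Lemma~\ref{l:exchangeinvariant} makes it reversible for allowed jumps; since $c_x$ is swap‑invariant and positive exactly where the jump is allowed, this yields $\mu(\mathbf 1_u)=\mu(\mathbf 1_v)$ whenever $u,v\in\Omega_\Lambda$ differ by one allowed jump inside a box $\Lambda$ large enough to contain the support of the constraint, hence, by chaining, whenever $u,v$ lie in the same allowed‑jump connected component. A configuration in $\cF'_k$ has all its finitely many particles in $\Lambda_n$ for $n$ large, so $\mu$ is purely atomic on the countable set $\cF'_k$; as Section~\ref{s:Facts} shows $\cF'_k$ to be a single connected component, $\mu$ would be constant on an infinite set — impossible. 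The sets $\cE'$ and $\cF''_k$ ($k\ge 1$) are disposed of the same way, each of their configurations carrying a free mobile cluster (resp.\ finitely many sparse holes) whose position ranges over an infinite orbit; the only change is that the configurations are no longer finitely supported, so one exhausts $\Z$ by boxes before concluding. (The lone absorbing non‑frozen configuration $\eta\equiv 1\in\cF''_0$ is grouped with the frozen part.)

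It remains to treat $\nu_\cE$, which is stationary and carried by non‑frozen configurations, hence reversible for allowed jumps and again satisfying $\nu_\cE(\mathbf 1_u)=\nu_\cE(\mathbf 1_v)$ for configurations joined by allowed jumps inside a box. Fix $\Lambda_n$ and $\sigma,\sigma'\in\Omega_{\Lambda_n}$ with $|\sigma|=|\sigma'|$; the goal is $\nu_\cE(\mathbf 1_\sigma)=\nu_\cE(\mathbf 1_{\sigma'})$. Every $\eta\in\cE$ has active particles arbitrarily far out, so it contains a mobile cluster lying entirely outside $\Lambda_{n+C}$; hence, writing $B_m$ for the cylinder event ``there is a mobile cluster in $\Lambda_m\setminus\Lambda_{n+C}$'', one has $B_m\uparrow$ and $\nu_\cE(\bigcup_m B_m)=1$, so $\nu_\cE(\mathbf 1_\sigma)=\lim_m\nu_\cE(\mathbf 1_\sigma\mathbf 1_{B_m})$. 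Decompose $\mathbf 1_\sigma\mathbf 1_{B_m}$ according to the value $w$ of $\eta$ on $\Lambda_m\setminus\Lambda_n$: for each admissible $w$ the configuration $\sigma\cdot w$ on $\Lambda_m$ lies in $G_{\Lambda_m}$ and has the same particle number as $\sigma'\cdot w$, so by the irreducibility fact of Section~\ref{s:Facts} these are connected by allowed jumps inside $\Lambda_m$ (route the cluster furnished by $w$ into $\Lambda_n$, rearrange $\sigma$ into $\sigma'$, send everything back), whence $\nu_\cE(\mathbf 1_{\sigma\cdot w})=\nu_\cE(\mathbf 1_{\sigma'\cdot w})$; summing over $w$ and letting $m\to\infty$ gives $\nu_\cE(\mathbf 1_\sigma)=\nu_\cE(\mathbf 1_{\sigma'})$. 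Thus the $\Lambda_n$‑marginal of $\nu_\cE$ depends only on the particle number, and since any finite set sits inside some $\Lambda_n$ and this property is stable under marginalisation, $\nu_\cE$ is exchangeable.

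By de Finetti's theorem, an exchangeable probability measure on $\{0,1\}^\Z$ is a mixture $\int_{[0,1]}\mu_\rho\,d\lambda(\rho)$. Finally $\eta\equiv 0$ is frozen and $\eta\equiv 1$ lies in $\cF''$, while $\nu_\cE(\cF)=\nu_\cE(\cF'')=0$, so $\lambda(\{0\})=\lambda(\{1\})=0$ and $\lambda$ is supported on $(0,1)$, which is \eqref{e:decompnuE}. I expect the main obstacle to be the catalyst argument of the third paragraph: verifying that $\nu_\cE$‑almost every configuration really supplies a mobile cluster that can be brought to the target box and returned, that ``rearrange the box and round‑trip the cluster'' is a bona fide finite sequence of allowed jumps leaving the far field untouched, and that the discrepancy between ``allowed jumps inside $\Lambda_m$'' (empty boundary) and the true constraint is absorbed by staying in the interior of $\Lambda_m$ — precisely the content that Section~\ref{s:Facts} must provide, and where the passage $m\to\infty$ has to be controlled. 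Lemma~\ref{l:exchangeinvariant} itself, the entropy input, is the other technical pillar, but it is assumed here.
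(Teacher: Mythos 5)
Most of your plan coincides with the paper's proof: the same decomposition over the invariant sets $\cF,\cF',\cF'',\cE',\cE$, the same use of Lemma~\ref{l:exchangeinvariant} together with the connectivity facts of Lemma~\ref{l:connections}, and for $\nu_\cE$ the same catalyst argument (your increasing events indexed by $m$ are just the monotone version of the paper's disjoint decomposition \eqref{e:decompositionnusigma}), followed by de Finetti. For $\cF'_k$ and $\cF''_k$ with $k\geq 1$ your ``equal atoms on a countably infinite connected class'' argument is a valid variant of the paper's summation over translated blocks; both produce infinitely many disjoint events of equal measure. Your remark about $\eta\equiv 1\in\cF''_0$ is pertinent: $\delta_{\{\eta\equiv 1\}}$ is stationary and belongs neither to $\cF$ nor to $\cE$, an edge case that the statement and the paper's ``similar arguments'' for $\cF''$ pass over; regrouping it with the frozen part is the natural fix, though strictly speaking it amends the statement rather than proves it.

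The genuine gap is the case $\nu=\nu_{\cE'}$. Your claim that $\cE'$ is ``disposed of the same way'' as $\cF'_k$ fails: a configuration of $\cE'$ consists of a finite active core together with \emph{infinitely many} isolated background particles whose positions are arbitrary up to the spacing condition, so $\cE'$ is uncountable and a stationary measure carried by it need not have any atoms; the ``constant on an infinite orbit'' contradiction therefore evaporates. Moreover the relevant event $A_\Lambda=\{$all active particles lie in $\Lambda\}$ is not a cylinder event, while Lemma~\ref{l:exchangeinvariant} only equates cylinder probabilities of configurations connected inside a finite box, so ``exhausting $\Z$ by boxes'' does not by itself yield anything. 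What is needed is a comparison of $\nu(A_\Lambda)$ with $\nu(A_{\Lambda'})$ for disjoint translates $\Lambda'$ of $\Lambda$: the paper approximates $A_\Lambda$ and $A_{\Lambda'}$ by decreasing cylinder events $B_N,B'_N$ supported in one large common window and exhibits an explicit bijection between them --- the mirror reflection $\Psi$ of \eqref{e:Phi} --- which preserves the particle number and keeps the window configuration in $G$, so that Lemma~\ref{l:connections} and Lemma~\ref{l:exchangeinvariant} give $\nu(\sigma)=\nu(\Psi(\sigma))$, hence $\nu(B_N)=\nu(B'_N)$ and, in the limit, $\nu(A_\Lambda)=\nu(A_{\Lambda'})$; summing over infinitely many disjoint translates then contradicts $\nu(\cE')\leq 1$. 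Some such measure-preserving correspondence between far-apart copies of the event (reflection, or an equivalent injection realized by allowed jumps at the level of window configurations) is the missing idea in your sketch; without it, $\alpha_{\cE'}=0$ is not established.
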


	
\begin{remark}
 It may seem surprising that there is no stationary probability concentrated on the invariant sets $\cF',\cF'',\cE'$. This should be interpreted as a manifestation of the fact that abnormalities (e.g.\@ finitely many active particles in a mostly frozen configuration) escape to infinity in large time and cannot be seen in a stationary regime.
\end{remark}

	\section{Facts about invariant sets and connections}\label{s:Facts}
	We start by stating a few facts that follow from definitions \eqref{e:defF}--\eqref{e:defE}.

\begin{definition}
	Let $\Lambda$ interval of $\Z$, $\sigma,\sigma'\in\Omega_\Lambda$ two configurations. We say that $\sigma$ and $\sigma'$ are connected inside $\Lambda$, and we write $\sigma\overset{\Lambda}{\leftrightarrow}\sigma'$ if it is possible to reach $\sigma'$ from $\sigma$ by performing a finite number of allowed jumps inside $\Lambda$. When $\Lambda=\Z$ we omit the superscript.
	\end{definition}

\begin{lemma}
\label{l:connections}
We have the following facts.
\begin{enumerate}
	 \item $\cF$, the $\cF'_k$ with $k\geq 2$, the $\cF''_k$ with $k\in\N$, $\cE'$ and $\cE$ are disjoint and invariant sets for the dynamics with generator \eqref{e:defL} (in fact they are invariant under allowed jumps). 
	 
	 \item For any $\Lambda$ interval of $\Z$, $\sigma,\sigma'\in G_\Lambda$, $|\sigma|=|\sigma'|$ implies $\sigma\overset{\Lambda}{\leftrightarrow}\sigma'$.
	 
	 \item For all $\sigma,\sigma'\in\cF'_k$ (resp.\ $\sigma,\sigma'\in\cF''_k$), for all $n\in\N^*$ such that $\{x\in\Z\ \colon\ \sigma(x)+\sigma'(x)>0\}\subset\Lambda_{n}$ (resp. $\{x\in\Z\ \colon\ \sigma(x)\sigma'(x)=0\}\subset\Lambda_{n-2}$), $\sigma\overset{\Lambda_n}{\leftrightarrow}\sigma'$. 
	 
	 \end{enumerate}
	\end{lemma}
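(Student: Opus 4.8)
\emph{Proof plan.} The three items are treated in turn, the third being deduced from the second.

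For item (1) I would first dispatch disjointness directly from \eqref{e:defF}--\eqref{e:defE}: $\cF'_k,\cF''_k,\cE',\cE$ all lie in $\cF^c$ while $\cF$ is frozen; the $\cF'_k$ are separated by the particle number and the $\cF''_k$ by the hole number; a configuration with finitely many particles has infinitely many holes, so $\cF'\cap\cF''=\emptyset$; a frozen configuration and a configuration with finitely many holes have, respectively, no and infinitely many pairs of particles within distance $2$, so both avoid $\cE'$, which avoids $\cF'$ by its very definition; and $\cE$ is the complement of the union. For invariance, since the dynamics only performs allowed jumps it suffices to show each set is stable under a single allowed jump $\eta\mapsto\eta^{x,x+1}$. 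Such a jump preserves $\sum_x\eta(x)$ and $\sum_x(1-\eta(x))$, so the only real point is that non-frozenness is preserved. I would prove this by a short case check: for a nontrivial allowed jump exactly one of $x,x+1$ is occupied and, by the last item of Assumption~\ref{ass}, at least one of $x-1,x+2$ is occupied; in each of the few cases the two particles witnessing that the jump is allowed -- one of which is the particle that moves -- still lie within distance $2$ afterwards, so a mobile cluster is mapped to a mobile cluster. This gives invariance of $\cF,\cF'_k,\cF''_k$. Finally, a jump at $(x,x+1)$ changes occupancies only at $x,x+1$ and hence the activity status only of particles sitting in $\{x-2,\dots,x+3\}$, so ``finitely many active particles'' is preserved; together with invariance of $\cF'$ this yields invariance of $\cE'$, and then of $\cE$ as the complement.

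For item (2) I would work with $\Lambda$ a finite interval, say $\Lambda=\{1,\dots,L\}$, $L\ge2$ (this is the case used below; an infinite interval reduces to a sufficiently large finite window). First I would note that $\overset{\Lambda}{\leftrightarrow}$ is an equivalence relation, the only nonobvious point being symmetry, which follows from the third item of Assumption~\ref{ass} ($c_x(0\cdot\sigma^{x,x+1})=c_x(0\cdot\sigma)$, so an allowed jump inside $\Lambda$ can be reversed). It then suffices to connect every $\sigma\in G_\Lambda$ with $|\sigma|=k$ to the fixed configuration $\pi_k$ whose particles occupy exactly $\{1,\dots,k\}$ (which lies in $G_\Lambda$ because $k\ge2$), since then $\sigma\overset{\Lambda}{\leftrightarrow}\pi_k\overset{\Lambda}{\leftrightarrow}\sigma'$. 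To connect $\sigma$ to $\pi_k$ I would use a mobile cluster of $\sigma$ as a vehicle and invoke the two elementary operations depicted in Figure~\ref{f:mobile-cluster}: (i) a mobile cluster can be moved, by allowed jumps, to any prescribed pair of (next-)neighbouring sites of $\Lambda$, whatever the rest of the configuration is; and (ii) a mobile cluster can pick up an extra particle and carry it one step left or right. Iterating (i)--(ii), one first gathers all $k$ particles into a single contiguous block and then slides that block -- itself a chain of overlapping mobile clusters, which moves freely -- to the left end of $\Lambda$, arriving at $\pi_k$. That this can be arranged to terminate I would justify by induction on $|\Lambda|$ (or by a monovariant such as the sum of the occupied positions).

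For item (3) I would deduce both cases from item (2) by checking that the restrictions to $\Lambda_n$ lie in $G_{\Lambda_n}$ with the same particle number. If $\sigma,\sigma'\in\cF'_k$ and all sites carrying a particle of $\sigma$ or $\sigma'$ lie in $\Lambda_n$, then $0\cdot\sigma_{|\Lambda_n}=\sigma$ and $0\cdot\sigma'_{|\Lambda_n}=\sigma'$, so the allowed jumps inside $\Lambda_n$ are precisely those of $\sigma$ (resp.\ $\sigma'$) and keep every particle inside $\Lambda_n$; since $\sigma\notin\cF$ there is a pair of particles within distance $2$, necessarily in $\Lambda_n$, whence $\sigma_{|\Lambda_n},\sigma'_{|\Lambda_n}\in G_{\Lambda_n}$ with $|\sigma_{|\Lambda_n}|=k=|\sigma'_{|\Lambda_n}|$, and item (2) gives $\sigma\overset{\Lambda_n}{\leftrightarrow}\sigma'$. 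If $\sigma,\sigma'\in\cF''_k$ and all sites where $\sigma$ or $\sigma'$ has a hole lie in $\Lambda_{n-2}$, then $\sigma$ and $\sigma'$ are identically $1$ on $\Lambda_n\setminus\Lambda_{n-2}$, which contains the pair $\{n-1,n\}$; hence $\sigma_{|\Lambda_n},\sigma'_{|\Lambda_n}\in G_{\Lambda_n}$, they have the same number $(2n+1)-k$ of particles, and item (2) applies inside $\Lambda_n$ once more.

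I expect the combinatorial heart of item (2) to be the main obstacle: turning the pictures of Figure~\ref{f:mobile-cluster} into an actual procedure connecting an arbitrary configuration of $G_\Lambda$ to $\pi_k$, and in particular making sure the manoeuvres never consume the last mobile cluster -- which is automatic once a block of at least two particles has been assembled. Items (1) and (3) should then be routine, resting only on the definitions and on item (2).
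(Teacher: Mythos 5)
Your proposal is correct and follows essentially the same route as the paper: item (1) by inspection of the definitions, item (2) by using a mobile cluster as a vehicle (the operations of Figure~\ref{f:mobile-cluster}) to gather all particles at one end of $\Lambda$ (you mass them at the left, the paper at the right), and item (3) by reducing to item (2) after checking the restrictions lie in $G_{\Lambda_n}$ with equal particle number. The extra details you supply (the case check for invariance in (1), the reversibility of allowed jumps, the boundary-condition check in (3)) are consistent with, and merely flesh out, the paper's argument.
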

\begin{proof}The first point should be clear from the definitions. 

The second point is a crucial consequence of the existence of a mobile cluster inside $\Lambda$. If $|\sigma|=2$, this is a restatement of the fact that a mobile cluster can transport itself to any other location (see Figure~\ref{f:mobile-cluster}). For the case $|\sigma|\geqslant 3$, the crucial argument is that a mobile cluster can also take an additional particle along with it. It is enough to treat the case where all particles of $\sigma'$ are massed to the right of $\Lambda$. As suggested in Figure~\ref{f:mobile-cluster}, it is possible to connect $\sigma$ to a configuration with a particle at the right-most site of $\Lambda$ and a mobile cluster in the remaining part of $\Lambda$. We can then repeat this procedure until all particles are massed to the right.

The third point is a consequence of the second: if $\sigma,\sigma'\in\cF'_k$ (resp. $\cF''_k$), $n$ is chosen so that their restriction to $\Lambda_n$ is in $G_n$ and $|\sigma_{\Lambda_n}|=|\sigma'_{\Lambda_n}|$. Indeed, in the $\cF'$ case, $\Lambda_n$ contains all the particles of $\sigma,\sigma'$, and since these configurations are not frozen, at least one mobile cluster. In the $\cF''$ case, $\Lambda_{n-2}$ contains all the empty sites of $\sigma,\sigma'$, and therefore both configurations have a mobile cluster in the two right-most sites of $\Lambda_n$.
\end{proof}

Our first result identifies the configurations that appear with positive probability under $\nu$.
\begin{lemma}\label{l:positive}
	Fix $\nu$ an invariant measure for the PMM. 
	\begin{enumerate}
		\item For all $k\in\N\setminus\{0,1\}$, either $\nu(\cF'_k)=0$ or $\forall \eta\in\cF'_k$, $\nu(\eta)>0$. 
		
		For all $k\in\N$, either $\nu(\cF''_k)=0$ or $\forall \eta\in\cF''_k$, $\nu(\eta)>0$. 
		\item If $\nu(\cE\cup\cE')>0$, for all $n\in\N^*$, for all $\sigma\in\Omega_{\Lambda_n}$, $\nu(\sigma)>0$.
		\end{enumerate}
\end{lemma}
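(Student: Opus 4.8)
The plan is to reduce everything to one principle — \emph{an allowed jump preserves positivity of $\nu$--mass} — and then combine it with the connectivity facts of Lemma~\ref{l:connections} and with conservation of the number of particles. So the first thing I would prove is: for a finite interval $\Lambda$ and $\tau,\tau'\in\Omega_\Lambda$ with $\tau\overset{\Lambda}{\leftrightarrow}\tau'$, one has $\nu(\tau)>0$ iff $\nu(\tau')>0$. Iterating along a path (the reverse of an allowed jump is allowed, by the symmetry $c_0(\eta)=c_0(\eta^{0,1})$ in Assumption~\ref{ass}), it is enough to handle a single allowed jump $\tau\to\tau'=\tau^{x,x+1}$ inside $\Lambda$; for this I would test stationarity against the local function $f=\mathbf 1_{\tau'}$. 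In $\cL f$ only the finitely many bonds meeting $\Lambda$ contribute, so $\cL f=\mathcal I-\mathcal J$ with $\mathcal I(\eta)=\sum_y c_y(\eta)\mathbf 1_{\tau'}(\eta^{y,y+1})$ and $\mathcal J(\eta)=\mathbf 1_{\tau'}(\eta)\sum_y c_y(\eta)$ (the sums over those bonds; both functions nonnegative and bounded). Since positivity of $c_0$ depends only on $\eta(-1)+\eta(2)$, a jump allowed inside $\Lambda$ with empty boundary is allowed, at rate $\geq\delta:=\min\{c_0(\eta):c_0(\eta)>0\}>0$, for \emph{every} $\eta$ extending $\tau$; taking $y=x$ gives $\mathcal I\geq\delta$ on $\{\eta_{|\Lambda}=\tau\}$, hence $0<\delta\,\nu(\tau)\leq\int\mathcal I\,d\nu=\int\mathcal J\,d\nu$, and since $\mathcal J$ vanishes off $\{\eta_{|\Lambda}=\tau'\}$ this forces $\nu(\tau')>0$.

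For point 2, I would first note (straight from the definitions) that every $\eta\in\cE\cup\cE'$ is non-frozen and has both infinitely many particles and infinitely many holes, hence carries a mobile cluster. Fix $n$ and $\sigma\in\Omega_{\Lambda_n}$. For $N$ large set $B_N=\{\eta:\eta_{|\Lambda_N}\in G_{\Lambda_N},\ |\eta_{|\Lambda_N}|\geq 2n+3,\ (2N+1)-|\eta_{|\Lambda_N}|\geq 2n+1\}$; these cylinder events increase in $N$ and their union contains $\cE\cup\cE'$, so $\nu(B_N)>0$ for $N$ large, and then some $\tau\in G_{\Lambda_N}$ with $2n+3\leq|\tau|\leq 2(N-n)$ has $\nu(\tau)>0$. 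I would then build $\tau'\in\Omega_{\Lambda_N}$ equal to $\sigma$ on $\Lambda_n$, with a mobile pair on $\{n+1,n+2\}$, and the remaining $|\tau|-|\sigma|-2$ particles placed anywhere on $\Lambda_N\setminus(\Lambda_n\cup\{n+1,n+2\})$ — possible because $0\leq|\tau|-|\sigma|-2\leq 2(N-n)-2$. Since $\tau,\tau'\in G_{\Lambda_N}$ and $|\tau|=|\tau'|$, Lemma~\ref{l:connections}(2) gives $\tau\overset{\Lambda_N}{\leftrightarrow}\tau'$, so $\nu(\tau')>0$ by the principle above; as $\{\eta_{|\Lambda_N}=\tau'\}\subseteq\{\eta_{|\Lambda_n}=\sigma\}$, this is exactly the claim.

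For point 1 I would work on $\cF'_k$ (the argument for $\cF''_k$ being identical, with particles and holes interchanged). Assume $\nu(\cF'_k)>0$. Since $\cF'_k$ is the increasing union of the \emph{finite} sets $D_N$ of its configurations whose $k$ particles lie in $\Lambda_N$, some $\nu(D_N)>0$, hence $\nu(\{\eta_*\})>0$ for some $\eta_*\in\cF'_k$. The crux is to establish, for every $\eta_0\in\cF'_k$, the balance relation
\[
r(\eta_0)\,\nu(\{\eta_0\})=\sum_{\eta\to\eta_0}c_x(\eta)\,\nu(\{\eta\}),
\]
the sum being over the finitely many $\eta$ (all in $\cF'_k$) from which an allowed jump reaches $\eta_0$, and $r(\eta_0)\in(0,\infty)$ the total rate of allowed non-trivial jumps out of $\eta_0$ — finite because a $k$--particle configuration has at most $2k$ sites $y$ with $\eta_0(y)\neq\eta_0(y+1)$. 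I would get this by testing stationarity against $f_M=\mathbf 1_{\{\eta:\eta_{|\Lambda_M}=(\eta_0)_{|\Lambda_M}\}}$ and letting $M\to\infty$: $\cL f_M$ converges pointwise to the function expressing the displayed identity, and $|\cL f_M|\leq C(k)\,\|c_0\|_\infty$ uniformly in $M$ (the flux across $\partial\Lambda_M$ uses only $O(1)$ bonds), so dominated convergence applies, $\nu$ being a probability measure. Finally, given any $\eta_0\in\cF'_k$, Lemma~\ref{l:connections}(3) supplies a finite chain of allowed jumps $\eta_*=w_0\to\cdots\to w_\ell=\eta_0$ inside some $\Lambda_m$, and applying the balance relation along this chain (each $w_{i-1}$ is an allowed in-neighbour of $w_i$) propagates $\nu(\{w_0\})>0$ to $\nu(\{\eta_0\})>0$.

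The step I expect to be the real obstacle is the balance relation in point 1: one must pass from the test-function definition of stationarity to an identity about the \emph{atoms} of $\cF'_k$, whereas the natural test function $\mathbf 1_{\{\eta_0\}}$ is not local and has to be approximated; the approximation is controllable only because the probability flux through the boundary of the approximating window involves a bounded number of bonds, keeping $\cL f_M$ uniformly bounded. Conceptually this is the statement that $\nu$ restricted to the invariant set $\cF'_k$ — into which, since allowed jumps are reversible, no mass flows — is a stationary measure of the irreducible countable-state chain the dynamics induces there, and stationary measures of irreducible chains are either zero or everywhere positive. Point 2, by contrast, is soft once the positivity principle is available; its only real content is the bookkeeping ensuring the positive-mass cylinder obtained from $(B_N)$ has a margin of particles \emph{and} of holes, so that it can be rewired, within one window, into a configuration prescribing $\sigma$ on $\Lambda_n$.
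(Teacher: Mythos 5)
Your proposal is correct, and while its engine is the paper's --- test stationarity against indicator functions, propagate (non)nullity along allowed jumps, and invoke Lemma~\ref{l:connections} --- your handling of point 1 takes a genuinely different, and in fact more careful, route. The paper proves point 1 by propagating \emph{zeros} at the level of cylinder events: from $\nu(\cL\mathbf{1}_\sigma)=0$ it deduces that a null cylinder forces all cylinders connected to it to be null, and then applies Lemma~\ref{l:connections}(3); transferring this to the atoms of the countable set $\cF'_k$ is left implicit, and it is not automatic, since a null atom can have all of its finite-window cylinders of positive $\nu$-mass. Your balance relation supplies exactly the missing step: you extract a positive atom by countability, approximate $\mathbf{1}_{\{\eta_0\}}$ by the local indicators $f_M$, pass to the limit by dominated convergence, and then push \emph{positivity} forward along the chain given by Lemma~\ref{l:connections}(3), using $0<r(w_i)<\infty$ (at most $2k$ discrepancy bonds, and every non-frozen configuration of $\cF'_k$, or of $\cF''_k$ with $k\geq 1$, admits a nontrivial allowed jump; $\cF''_0$ is a singleton, so trivial). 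What this buys is a fully atom-level argument where the paper is terse; what the paper's version buys is brevity, since it never leaves the cylinder algebra. Your positivity principle and your point 2 are essentially the paper's argument: the paper phrases the principle as zero-propagation from $\nu(\cL\mathbf{1}_\sigma)=0$, you phrase the contrapositive with the explicit rate bound $\delta=\min\{c_0(\xi)\colon c_0(\xi)>0\}>0$ (valid since $c_0$ is local and the constraint is monotone in $\eta$), and your event $B_N$ with margins $2n+3$ particles and $2n+1$ holes plays the role of the paper's event with margins $|\sigma|+2$ and $|1-\sigma|$; your counting for $\tau'$ checks out. One small imprecision: the uniform bound $|\cL f_M|\leq C(k)\|c_0\|_\infty$ is not only about the $O(1)$ bonds crossing $\partial\Lambda_M$; it also uses that a configuration agreeing with $\eta_0$ on $\Lambda_M$ has at most $2k$ interior bonds where a swap alters the window (this is where $C(k)$ comes from), and that when agreement fails at most two bonds can restore it --- the bound itself is correct, so this is a wording fix, not a gap.
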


\begin{proof} The lemma relies on Lemma~\ref{l:connections} and the following observation: if $\nu$ is invariant, for all $\sigma\in\Omega_{\Lambda_n}$, $\nu(\cL \mathbf{1}_\sigma)=0$. This means 
	\[\sum_{x=-n-1}^n\left[\nu(c_x\mathbf{1}_\sigma^{x,x+1})-\nu(c_x\mathbf{1}_\sigma)\right]=0,\]
	where $f^{x,x+1}(\eta)=f(\eta^{x,x+1})$ for $\eta\in\Omega$. In particular, if $\nu(\sigma)=0$, the negative part vanishes in every term of the sum, and $\nu(c_x\mathbf{1}_\sigma^{x,x+1})=0$ for all $x\in\{-n-1,\ldots,n\}$. Equivalently, all configurations that can be obtained from $\sigma$ by an allowed jump have $0$ probability under $\nu$. Iterating the argument yield that all configurations $\sigma'$ connected to $\sigma$ inside $\Lambda_n$ also satisfy $\nu(\sigma')=0$.
	
	Let us now prove the two points of the lemma. 
	\begin{enumerate}
		\item Fix $k\in\N\setminus\{0,1\}$ and assume $\nu(\cF_k')>0$. Assume by contradiction that there exists $\eta\in\cF'_k$ such that $\nu(\eta)=0$. Let us show that for any $\eta'\in\cF'_k$, $\nu(\eta')=0$, which contradicts $\nu(\cF_k')>0$. Choosing $n$ large enough so that $\{x\in\Z\ \colon\ \eta(x)+\eta'(x)>0\}\subset\Lambda_{n}$, Lemma~\ref{l:connections} gives the result.
		
		The proof is similar for $\cF''_k$.
		\item Assume by contradiction that there exists $\sigma\in\Omega_{\Lambda_n}$ such that $\nu(\sigma)=0$. We claim that, if $\nu(\cE\cup\cE')>0$, there exists $N\in\N^*$, $\eta,\eta'\in\Omega_{\Lambda_N}$ such that $\eta_{|\Lambda_n}=\sigma$, $\nu(\eta')>0$ and $\eta\overset{\Lambda_N}{\leftrightarrow}\eta'$. This is a contradiction because $\nu(\sigma)=0$ implies $\nu(\eta)=0$, which implies by connection $\nu(\eta')=0$. 
		
		The claim follows if we can show that there exists $N\geq n+2$, $\eta'\in G_N$ such that $|\eta'|\geq |\sigma|+2$, $|1-\eta'|\geq |1-\sigma|$ and $\nu(\eta')>0$. Indeed, in that case, it is easy to extend $\sigma$ into a configuration $\eta\in G_N$ such that $|\eta|=|\eta'|$. Then, by Lemma~\ref{l:connections}, $\eta$ and $\eta'$ are connected inside $\Lambda_N$.
		
		Let us show that we can find $N,\eta'$ as above. By definition of $\cE,\cE'$, for any $\xi\in\cE\cup\cE'$, there exists $N\in\N^*$ such that $\xi_{|\Lambda_N}\in G_N$, $|\xi_{|\Lambda_N}|\geq |\sigma|+2$ and  $|1-\xi_{|\Lambda_N}|\geq |1-\sigma|$ (and these properties hold for any $N'\geq N$). Therefore, since $\nu(\cE\cup\cE')>0$, there exists $N\geq n+2$ such that 
		\[\nu(\{\xi\in\Omega\ \colon\ \xi_{|\Lambda_N}\in G_N, |\xi_{|\Lambda_N}|\geq |\sigma|+2\text{ and }|1-\xi_{|\Lambda_N}|\geq |1-\sigma|\})>0,\]
		which shows the existence of $\eta'$ as desired.
		\end{enumerate}
	\end{proof}

\section{Proof of Theorem~\ref{t:invariant}}\label{s:proofTheorem}

	From the invariance of the sets $\cF,\cF',\cF'',\cE,\cE'$ under allowed jumps, we deduce immediately the existence of a decomposition $
	\nu=\alpha_\cF\nu_{\cF}+\alpha_{\cF'}\nu_{\cF'}+\alpha_{\cF''}\nu_{\cF''}+\alpha_{\cE'}\nu_{\cE'}+\alpha_\cE\nu_\cE,
	$ with $\nu_X $ stationary, $\nu_X(X)=1$ and $\alpha_X=\nu(X)$ for $X\in\{\cF,\cF',\cF'',\cE,\cE'\}$, letting $\nu_X=\nu(\cdot|X)$ if $\nu(X)>0$. Indeed in that case, for any $f$ local function $\nu(\mathcal Lf|X)=\nu(\mathbf{1}_X\mathcal Lf)/\nu(X)=\nu(\cL (\mathbf{1}_Xf))/\nu(X)=0$, so that $\nu_X$ is stationary. 
	
	It remains to show $\alpha_{\cF'}=\alpha_{\cF''}=\alpha_{\cE'}=0$, and that $\nu_\cE$ has the form \eqref{e:decompnuE}. 
	The main ingredient to obtain these results is the following lemma.
\begin{lemma}\label{l:exchangeinvariant}
	If $\nu$ is stationary and $\nu(\cF'\cup\cF''\cup\cE\cup\cE')>0$, for all $\sigma\in\Omega_{\Lambda_n}$, $x\in\{-n,\ldots,n-1\}$,
	\[c_{x,x+1}(\sigma)\left[\nu(\sigma^{x,x+1})-\nu(\sigma)\right]=0.\]
	Consequently, for any $\Lambda\subset \Z$, $\sigma,\sigma'\in\Omega_\Lambda$, if $\sigma\overset{\Lambda}{\leftrightarrow}\sigma'$, then $\nu(\sigma)=\nu(\sigma')$.
	\end{lemma}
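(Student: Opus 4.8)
The plan is to adapt the Holley--Stroock relative-entropy argument, using one of the reversible measures $\mu_\rho$ ($\rho\in(0,1)$ fixed) as reference and localising it to finite boxes. Since $\mu_\rho(\sigma)>0$ for every $\sigma\in\Omega_\Lambda$, $\Lambda$ a finite interval, the ratio $g_\Lambda(\sigma):=\nu(\sigma)/\mu_\rho(\sigma)$ is well defined on $\Omega_\Lambda$; moreover $\{\sigma\in\Omega_\Lambda:\nu(\sigma)>0\}$ is invariant under allowed jumps inside $\Lambda$ — this is exactly the propagation-of-zeros computation already carried out in the proof of Lemma~\ref{l:positive} — and on its complement the asserted identity is trivial, by the same computation: if $\nu(\sigma)=0$ and $c_{x,x+1}(\sigma)>0$ then $\nu(\sigma^{x,x+1})=0$ as well. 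So I may restrict attention to $\{g_{\Lambda_N}>0\}$ and read stationarity as $\nu\big(\cL\log g_{\Lambda_N}\big)=0$ (the local function $\log g_{\Lambda_N}$ being extended by $0$ where $g_{\Lambda_N}=0$): this is the finite-volume version of the Holley--Stroock observation that a stationary measure produces no relative entropy.

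By the definition of $\cL$, $\cL\log g_{\Lambda_N}(\eta)=\sum_x c_x(\eta)\big[\log g_{\Lambda_N}(\eta^{x,x+1})-\log g_{\Lambda_N}(\eta)\big]$, and only jumps $x$ with $\{x,x+1\}\cap\Lambda_N\neq\varnothing$ contribute. Call \emph{interior} those $x$ for which $\{x,x+1\}$ and the support of $c_x$ both lie in $\Lambda_N$; for the PMM this leaves out only the $O(1)$ jumps within range of an endpoint of $\Lambda_N$. Because $\mu_\rho$ and $c_x$ are invariant under the transposition of $x$ and $x+1$ (third and fourth bullets of Assumption~\ref{ass}), symmetrising shows that for interior $x$ the contribution of $x$ to $\nu(\cL\log g_{\Lambda_N})$ equals
\[
-\tfrac12\sum_{\sigma\in\Omega_{\Lambda_N}}\mu_\rho(\sigma)\,c_x(\sigma)\big(g_{\Lambda_N}(\sigma)-g_{\Lambda_N}(\sigma^{x,x+1})\big)\big(\log g_{\Lambda_N}(\sigma)-\log g_{\Lambda_N}(\sigma^{x,x+1})\big)=:-\tilde D^{(N)}_x\le 0,
\]
since $(a-b)(\log a-\log b)\ge0$. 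Hence $\sum_{x\ \mathrm{interior}}\tilde D^{(N)}_x$ equals the sum of the finitely many boundary contributions. I also record that $(a,b)\mapsto(a-b)(\log a-\log b)$ is jointly convex and that $g_{\Lambda_M}$ is the $\mu_\rho$-conditional expectation of $g_{\Lambda_N}$ given the coordinates in $\Lambda_M$, so that by Jensen each $\tilde D^{(N)}_x$ is nondecreasing in $N$; and $\tilde D^{(N)}_x=0$ together with $c_x(\sigma)>0$ forces $\nu(\sigma)=\nu(\sigma^{x,x+1})$.

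The crux, and the step I expect to be the main obstacle, is to show that the boundary contributions force $\tilde D^{(N)}_x=0$ for every interior $x$. This is where dimension one enters, as in \cite{holley-stroock,liggett}: there being only $O(1)$ boundary jumps, for each of them one should use the conservation law $\sum_\sigma\nu\big(c_x[\mathbf 1_\sigma^{x,x+1}-\mathbf 1_\sigma]\big)=0$ over the fibres left fixed by the jump to replace the factor $\log g_{\Lambda_N}(\sigma)$ by a one-site conditional log-ratio of $\nu$ — whose analogue under $\mu_\rho$ is the \emph{constant} $\log\tfrac{\rho}{1-\rho}$ — and then estimate the resulting boundary sum, exploiting the one-dimensional geometry together with the structure of the constraints. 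Granting this, $\sum_{x\ \mathrm{interior}}\tilde D^{(N)}_x=0$, so $\nu(\sigma)=\nu(\sigma^{x,x+1})$ whenever $c_x(\sigma)>0$, for every $\sigma\in\Omega_{\Lambda_N}$, every interior $x$, and all $N$.

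It remains to descend to $\Omega_{\Lambda_n}$ and to conclude. Given $\sigma\in\Omega_{\Lambda_n}$ and $x\in\{-n,\dots,n-1\}$: if $c_{x,x+1}(\sigma)=c_x(0\cdot\sigma)=0$ there is nothing to prove; otherwise the last bullet of Assumption~\ref{ass} gives $c_x(\tilde\sigma)>0$ for \emph{every} extension $\tilde\sigma\in\Omega_{\Lambda_N}$ of $\sigma$ (take $N$ large enough that $x$ is interior), whence $\nu(\tilde\sigma)=\nu(\tilde\sigma^{x,x+1})$; summing over such $\tilde\sigma$ and using that $\tilde\sigma\mapsto\tilde\sigma^{x,x+1}$ is a bijection from the extensions of $\sigma$ onto those of $\sigma^{x,x+1}$ (the transposition acting inside $\Lambda_n$) gives $\nu(\sigma)=\nu(\sigma^{x,x+1})$. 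The ``consequently'' statement then follows by chaining: a path realising $\sigma\overset{\Lambda}{\leftrightarrow}\sigma'$ consists of finitely many allowed jumps, each of strictly positive rate at the current configuration and together affecting only finitely many sites, so applying the first assertion along the path — after enlarging $\Lambda$ to a box $\Lambda_n$ containing all affected sites, and approximating by finite boxes if $\Lambda$ is infinite — yields $\nu(\sigma)=\nu(\sigma')$.
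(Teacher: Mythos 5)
Your setup is the same Holley--Stroock entropy framework the paper uses (relative entropy of $\nu$ with respect to $\mu_\rho$ on $\Lambda_N$, the identity $\nu(\cL\log g_{\Lambda_N})=0$, symmetrisation of the interior terms into nonnegative Dirichlet-type quantities, monotonicity in $N$ via convexity/subadditivity, and the final descent to $\Omega_{\Lambda_n}$ and chaining along allowed jumps). But the proof has a genuine gap exactly where you flag it: you never establish that the boundary contributions force the interior quantities $\tilde D^{(N)}_x$ to vanish; you write ``granting this'' and move on. That step is the entire technical content of the paper's proof of the lemma. There one must (i) show the boundary part $B_n$ is finite and bounded uniformly in $n$, which requires splitting the boundary sums according to the size of the ratios $\nu(\sigma^{x,x+1})/\nu(\sigma)$ (the truncation at levels $K e^k$, $K^{-1}e^{-k}$ leading to the bound \eqref{e:estimBn}), since nothing bounds these log-ratios a priori -- $\nu$ has no product structure, so your suggestion to replace $\log g_{\Lambda_N}$ at the boundary by a ``one-site conditional log-ratio of $\nu$'' does not produce a bounded quantity and is not carried out; (ii) use monotonicity of $B_n=-I_n$ to get convergence, hence that the increments $\alpha_{n+1}(\pm n)$ of the bulk Dirichlet sum vanish; and (iii) close the loop with the Liggett-type inequality $\beta_{m+1}(x)\le\sqrt{2\alpha_{m+1}(x)}$, which shows the boundary terms $\beta$ appearing in \eqref{e:estimBn} vanish, so that letting $n\to\infty$ and then $K\to\infty$ gives $B_n\to0$ and hence $I_n\to0$. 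Without some version of (i)--(iii) the conclusion $\nu(\sigma^{x,x+1})=\nu(\sigma)$ simply does not follow from your display.

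A secondary point: the hypothesis of the lemma also covers the cases $\nu(\cF')>0$ or $\nu(\cF'')>0$ with $\nu(\cE\cup\cE')=0$, where many cylinders do have zero probability. Your blanket ``extend $\log g$ by $0$ on the zero set'' is only a gesture; the paper instead runs the argument with a modified entropy restricted to configurations with $|\sigma|\le k$ (resp.\ at most $k$ holes) and the convention $\Phi(0,0)=0$, precisely so that all the symmetrisation and subadditivity steps remain valid. Your zero-propagation observation and the final descent/chaining argument are fine, but as written the proposal assumes the lemma's analytic core rather than proving it.
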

Let us defer the proof of Lemma~\ref{l:exchangeinvariant} to Section~\ref{s:Lemma} to conclude the proof of Theorem~\ref{t:invariant}. 

\medskip

Assume first that $\nu=\nu_\cE$. We want to show~\ref{e:decompnuE}. By de Finetti theorem, it is enough to show that $\nu$ is exchangeable, i.e.\@ that for $\sigma\in\Omega_{\Lambda_n}$, $\nu(\sigma)$ depends only on $|\sigma|$. Suppose $\Lambda=\Lambda_{n_0}$ and consider $\sigma,\sigma'\in\Omega_\Lambda$ such that $|\sigma|=|\sigma'|$. It is enough to show $\nu(\sigma)=\nu(\sigma')$ in that case. If $\sigma,\sigma'\in G_\Lambda$,  Lemma~\ref{l:connections} and the corollary of Lemma~\ref{l:exchangeinvariant} imply $\nu(\sigma)=\nu(\sigma')$. It remains to treat the case where $\sigma$ or $\sigma'$ is not in $G_\Lambda$. 

For $n\geq 1$, write $B_n=\Lambda_{n_0+n}\setminus\Lambda$. For $\sigma\in\Omega_{\Lambda}$, write
\begin{equation}\label{e:decompositionnusigma}
\nu(\sigma)=\sum_{n\geq 2}\sum_{\zeta\in G_{B_n}\setminus G_{B_{n-1}}}\nu(\sigma\cdot\zeta),
\end{equation}
where $\sigma\cdot\zeta$ denotes the configuration equal to $\sigma$ in $\Lambda$ and to $\zeta$ in $B_n$. This equality holds because, thanks to the definition of $\cE$, with probability $1$ under $\nu$, there exists a pair of active particles outside $\Lambda$. 

Now fix $\sigma,\sigma'\in\Omega_\Lambda$ such that $|\sigma|=|\sigma'|$, $\zeta\in G_{B_n}$. We clearly have $\sigma\cdot\zeta,\sigma'\cdot\zeta\in G_{\Lambda_{n_0+n}}$ and $|\sigma\cdot\zeta|=|\sigma'\cdot\zeta|$. Therefore, $\nu(\sigma'\cdot\zeta)=\nu(\sigma\cdot\zeta)$. Summing these equalities in the decompositions \eqref{e:decompositionnusigma} for $\sigma,\sigma'$ implies in turn $\nu(\sigma)=\nu(\sigma')$.

\medskip

Let us now consider the case $\nu=\nu_{\cF'}$ or $\nu=\nu_{\cF''}$. We want to show that there is a contradiction with $\nu$ being stationary. Fix $k$ such that e.g.\@ $\nu(\cF'_k)>0$. By Lemma~\ref{l:positive}, denoting $\chi_A$ the indicator function of a set $A$, $\nu(\{\eta= \chi_{\{1,\ldots,k\}}\})>0$. Fix now $n\in\N$ and an interval $\Lambda$ containing $\{1,\ldots,k\}\cup\{n+1,\ldots,n+k\}$. By Lemma~\ref{l:positive} and Lemma~\ref{l:exchangeinvariant}, $\nu(\{\eta= \chi_{\{n+1,\ldots,n+k\}}\})$ does not depend on $n$. But this means 
\begin{equation}
 \nu(\cF'_k)\geq\sum_{n\in\N}\nu(\{\eta= \chi_{\{n+1,\ldots,n+k\}}\})=\infty.
\end{equation}
With similar arguments in the case $\nu=\nu_{\cF''}$, we conclude that $\alpha_{\cF'}=\alpha_{\cF''}=0$.

\medskip

 It remains to find a similar contradiction when $\nu=\nu_{\cE'}$.  For $\Lambda\subset\Z$, consider the event
\begin{equation}
 A_\Lambda:=\{\eta\in\cE'\ :\ \text{all active particles of $\eta$ are in }\Lambda\}.
\end{equation}
Then by definition, $\cE'=\bigcup_{n=1}^\infty A_{\Lambda_n}$. In particular, there exists $n\in\N^*$ such that $\nu(A_{\Lambda_n})>0$. We may and do assume $n$ even.
Let us show that Lemma~\ref{l:exchangeinvariant} implies, for all $k\in\N$, $\nu(A_{\Lambda_n+3kn})=\nu(A_{\Lambda_n})>0$. This is a contradiction since the $\left(A_{\Lambda_n+3kn}\right)_k$ are disjoint (because $(\Lambda_{n}+3kn)\cap(\Lambda_n+3k'n)=\emptyset$ if $k\neq k'$) and $\nu(\cE')<\infty$. 
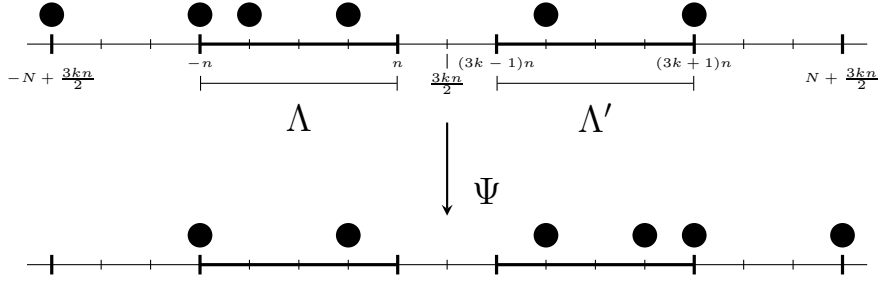
\begin{figure}
 \begin{center}
\begin{tikzpicture}[scale=0.65]
			\draw (-8.5,0) -- (8.5,0);
			\draw[very thick] (-5,0) -- (-1,0);
			\draw[very thick] (1,0) -- (5,0);
			\foreach \x in {-8,...,8}
			\draw (\x, -0.1) -- (\x,0.1);

			\draw[very thick] (-8,-0.2) -- (-8,0.2);
			\draw[very thick] (8,-0.2) -- (8,0.2);
			\draw[very thick] (-5,-0.2) -- (-5,0.2);
			\draw[very thick] (5,-0.2) -- (5,0.2);
			\draw[very thick] (-1,-0.2) -- (-1,0.2);
			\draw[very thick] (1,-0.2) -- (1,0.2);

            \draw[|-|] (-5,-0.8) to (-1,-0.8);
            \draw[|-|] (1,-0.8) to (5,-0.8);
			\draw (-3,-1.5) node{\Large{${\Lambda}$}};
			\draw (3,-1.5) node{\Large{${\Lambda'}$}};

			\draw (-8,-0.7) node{\tiny{$-N+\frac{3kn}{2}$}};
            \draw (8,-0.7) node{\tiny{$N+\frac{3kn}{2}$}};
            \draw (0,-0.8) node{\tiny{$\frac{3kn}{2}$}};
            \draw (0,-0.2) -- (0,-0.5);
            \draw (-5,-0.4) node{\tiny{$-n$}};
            \draw (-1,-0.4) node{\tiny{$n$}};
            \draw (5,-0.4) node{\tiny{$(3k+1)n$}};
            \draw (1,-0.4) node{\tiny{$(3k-1)n$}};

            \fill (-8,0.6) circle(0.25);
            \fill (-5,0.6) circle(0.25);
            \fill (-4,0.6) circle(0.25);
            \fill (-2,0.6) circle(0.25);
            \fill (2,0.6) circle(0.25);
            \fill (5,0.6) circle(0.25);
            
            \draw[-stealth, thick](0,-1.6) -- (0,-3.5);
            \draw (0.8,-3) node{\Large{$\Psi$}};
            
            \draw (-8.5,-4.5) -- (8.5,-4.5);
			\draw[very thick] (-5,-4.5) -- (-1,-4.5);
			\draw[very thick] (1,-4.5) -- (5,-4.5);
			\foreach \x in {-8,...,8}
			\draw (\x, -4.6) -- (\x,-4.4);

			\draw[very thick] (-8,-4.7) -- (-8,-4.3);
			\draw[very thick] (8,-4.7) -- (8,-4.3);
			\draw[very thick] (-5,-4.7) -- (-5,-4.3);
			\draw[very thick] (5,-4.7) -- (5,-4.3);
			\draw[very thick] (-1,-4.7) -- (-1,-4.3);
			\draw[very thick] (1,-4.7) -- (1,-4.3);
			
			   \fill (-5,-3.9) circle(0.25);
            \fill (5,-3.9) circle(0.25);
            \fill (4,-3.9) circle(0.25);
            \fill (2,-3.9) circle(0.25);
            \fill (-2,-3.9) circle(0.25);
            \fill (8,-3.9) circle(0.25);
            
			\end{tikzpicture}
			\end{center}
			\caption{On the first line are depicted $\Lambda,\Lambda'$ appearing in the proof that $\alpha_{\cE'}=0$, with $n=2,k=1$. The configuration on top is in $B_N$, and the bottom part of the picture gives its image under the application $\Phi$ defined in \eqref{e:Phi}.}\label{f:Lambdas}
			\end{figure}
Fix $k\in\N$ and let $\Lambda=\Lambda_n$, $\Lambda'=\Lambda_n+3kn$. For $N\geq (3k+1)N$, we let (see Figure~\ref{f:Lambdas}) $\Lambda_{N,1}=[-N,-n-1]\cap\Z$, $\Lambda_{N,2}=[n+1,N]$, $\Lambda'_{N,1}=[(3k+1)n+1,N]$, $\Lambda'_{N,2}$. We have that $A_{\Lambda}=\cap_{N\geq (3k+1)n}B_N$, $A_{\Lambda'}=\cap_{N\geq (3k+1)n}B'_N$, where
\begin{align*}
 B_N&:=\{\eta\in\Omega: \text{there are active particles in }\Lambda_N+(3kn)/2\text{ with empty boundary condition and they are in }\Lambda\},\\
 B'_N&:=\{\eta\in\Omega: \text{there are active particles in }\Lambda_N+(3kn)/2\text{ with empty boundary condition and they are in }\Lambda'\}.
\end{align*}
Note that $B_N,B'_N$ are not subsets of $\cE'$, and $\mathbf{1}_{B_N},\mathbf{1}_{B'_N}$ are local functions with support in $\Lambda_N$. Moreover, $\nu(A_\Lambda)=\lim_{N\rightarrow \infty}\nu(B_N)$ and $\nu(A_{\Lambda'})=\lim_{N\rightarrow \infty}\nu(B'_N)$ since the sequences $(B_N)_N,(B'_N)_N$ are decreasing. Therefore, it is enough to show that $\nu(B_N)=\nu(B'_N)$. This follows from Lemma~\ref{l:exchangeinvariant} and the following bijection between $B_N$ and $B'_N$:
\begin{equation}\Psi(\sigma)(-N+(3kn)/2+x)=\sigma(N+(3kn)/2-x),\text{ for all } x\in\Lambda_N+(3kn)/2.\label{e:Phi}
\end{equation}
In words, $\Psi$ performs a mirror reflection on $\sigma$ inside $\Lambda_N+(3kn)/2$ (see Figure~\ref{f:Lambdas}). It is easy to check that this defines indeed a bijection, and Lemma~\ref{l:exchangeinvariant} implies $\forall \sigma\in B_N$, $\nu(\sigma)=\nu(\Psi(\sigma))$. It follows that $\alpha_{\cE'}=0$.

\section{Proof of the invariance under allowed exchanges (Lemma 3)}\label{s:Lemma}
	The following is an adaptation of the arguments in \cite{holley-stroock}, as well as their reformulations in \cite[Section IV.4]{liggett} or \cite{neuhauser-sudbury}. 
	
	Let us first assume $\nu(\cE\cup\cE')>0$. Thanks to Lemma~\ref{l:positive}, in that case, the following quantity is well-defined for $\rho\in(0,1)$.
	\begin{equation}
	H_n(\nu)=\sum_{\sigma\in\Omega_{\Lambda_n}}\nu(\sigma)\log\left[\frac{\nu(\sigma)}{\mu_\rho(\sigma)}\right].
	\end{equation}
	Moreover, writing $\nu_t=\nu$ for the distribution of the process at time $t$ starting from $\nu$ and with generator~\eqref{e:defL}, $\frac{d}{dt}H_n(\nu_t)=0$. Computing the derivative and evaluating it at time $0$, we get the identity 
	\begin{equation}
	\sum_{\sigma\in\Omega_{\Lambda_n}}\log\left(\frac{\nu(\sigma)}{\mu_\rho(\sigma)}\right)\nu(\cL \mathbf{1}_{\sigma})=0.
	\end{equation}
	
For $n\in\N^*$, $x\in\{-n,\ldots,n-1\}$, $\sigma\in\Omega_{\Lambda_n}$, let us define
\begin{equation}
\Gamma_n(x,\sigma)=\nu(c_{x,x+1}\mathbf{1}_\sigma).
\end{equation}
Note that, for $x\in\{-n+1,\ldots,n-2\}$, we simply have $\Gamma_n(x,\sigma)=c_{x,x+1}(\sigma)\nu(\sigma)$. Also, for all $m\geq n$, 
\begin{equation}\label{e:Gammaadditive}
\Gamma_n(x,\sigma)=\sum_{\zeta\in\Omega_{\Lambda_m},\zeta_{|\Lambda_n}=\sigma}\Gamma_m(x,\zeta).
\end{equation}

	Let us split this into $I_n+B_n=0$, with
	\begin{align}
	I_n=\sum_{\sigma\in\Omega_{\Lambda_n}}\sum_{x=-n+1}^{n-2}\log&\left(\frac{\nu(\sigma)}{\mu_\rho(\sigma)}\right)\left[\Gamma_n(x,\sigma^{x,x+1})-\Gamma_n(x,\sigma)\right],\\
	B_n=\sum_{\sigma\in\Omega_{\Lambda_n}}\log\left(\frac{\nu(\sigma)}{\mu_\rho(\sigma)}\right)\Big\{&\Gamma_n(-n,\sigma^{-n,-n+1})-\Gamma_n(-n,\sigma)\\
	&
	+\ \Gamma_n(n-1,\sigma^{n-1,n})-\Gamma_n(n,\sigma)\\
	&+\sum_{\zeta\in\Omega_{\Lambda_{n+1}},\zeta_{|\Lambda_n}=\sigma}\left(\Gamma_{n+1}(-n-1,\zeta^{-n-1,-n})-\Gamma_{n+1}(-n-1,\zeta)\right)\\
	&+\sum_{\zeta\in\Omega_{\Lambda_{n+1}},\zeta_{|\Lambda_n}=\sigma}\left(\Gamma_{n+1}(n,\zeta^{n,n+1})-\Gamma_{n+1}(n,\zeta)\right)\Big\}.
	\end{align}
	
	$I_n$ contains the bulk terms, for which $c_{x,x+1}\mathbf{1}_\sigma$ only depends on the configuration inside $\Lambda_n$. In particular we can write $\Gamma_n(x,\sigma)=c_{x,x+1}(\sigma)\nu(\sigma)$ for $x\in\{-n+1,\ldots,n-2\}$. $B_n$ contains the boundary terms which need to look outside $\Lambda_n$ to decide the value of $c_{x,x+1}$ and, for the last two lines, that of $\sigma^{x,x+1}$.
	
	Using that $c_{x,x+1}(\sigma)=c_{x,x+1}(\sigma^{x,x+1})$, let us symmetrize $I_n$ into
	\begin{align}
	I_n&=-\frac{1}{2}\sum_{x=-n+1}^{n-2}\sum_{\sigma\in\Omega_{\Lambda_n}}\log\left(\frac{\nu(\sigma^{x,x+1})\mu_\rho(\sigma)}{\nu(\sigma)\mu_\rho(\sigma^{x,x+1})}\right)\left[\Gamma_n(x,\sigma^{x,x+1})-\Gamma_n(x,\sigma)\right]\\
	&=-\frac{1}{2}\sum_{x=-n+1}^{n-2}\sum_{\sigma\in\Omega_{\Lambda_n}}c_{x,x+1}(\sigma)\log\left(\frac{\nu(\sigma^{x,x+1})}{\nu(\sigma)}\right)(\nu(\sigma^{x,x+1})-\nu(\sigma)).
	\end{align}
	Define $\Phi(u,v)=\log(u/v)(v-u)$ and notice that it is convex, subadditive\footnote{Because convex and $\Phi(\lambda a,\lambda b)=\lambda\Phi(a,b)$. Indeed, by homogeneity $\Phi(a+a',b+b')=2\Phi((a+a')/2,(b+b')/2)\leq \Phi(a,b)+\Phi(a',b')$ by convexity.} and non-negative. In particular, defining 
	\begin{equation}
	\alpha_n(x)=\sum_{\sigma\in\Omega_{\Lambda_n}}c_{x,x+1}(\sigma)\Phi(\nu(\sigma^{x,x+1}),\nu(\sigma)),
	\end{equation}
	subadditivity implies that $(\alpha_n(x))_{n\geq N(x)}$ ($N(x)=\inf\{n\colon {x,x+1}\in\Lambda_n\}$) is a non-decreasing sequence of non-negative real numbers. Therefore, if $I_n\underset{n\rightarrow \infty}{\longrightarrow} 0$, $\alpha_n(x)=0$ for all $n\geq N(x)$. Since $\Phi(u,v)=0$ iff $u=v$, this is enough for our purposes and we only need to show that $B_n\underset{n\rightarrow \infty}{\longrightarrow} 0$.
	
	We first show that $\sup_nB_n<\infty$. Since $B_n=-I_n$ is non-negative, this implies that $(B_n)_{n}$ is bounded. Also, since \[-2I_n=\sum_{x=-n+1}^{n-2}\alpha_n(x)\]
	$-2(I_{n+1}-I_n)\geq \alpha_{n+1}(-n)\geq 0$, $(I_n)_n$ is non-increasing, and $(B_n)_n$ non-decreasing, so we will also deduce that $(B_n)_n$ converges. Let us symmetrize $B_n$:
	\begin{align}
	2B_n=&-\sum_{\sigma\in\Omega_{\Lambda_n}}\log\left(\frac{\nu(\sigma^{-n,-n+1})\mu_p(\sigma)}{\nu(\sigma)\mu_p(\sigma^{-n,-n+1})}\right)\left[\Gamma_n(-n,\sigma^{-n,-n+1})-\Gamma_n(-n,\sigma)\right]\label{e:Bn1}\\
	&- \sum_{\sigma\in\Omega_{\Lambda_n}}\log\left(\frac{\nu(\sigma^{n-1,n})\mu_p(\sigma)}{\nu(\sigma)\mu_p(\sigma^{n-1,n})}\right)\left[\Gamma_n(n-1,\sigma^{n-1,n})-\Gamma_n(n-1,\sigma)\right]\label{e:Bn2}\\
	&-\sum_{\sigma\in\Omega_{\Lambda_{n+1}}}\log\left(\frac{\nu(\sigma_{|\Lambda_n}^{-n-1,-n})\mu_p(\sigma_{|\Lambda_n})}{\nu(\sigma_{|\Lambda_n})\mu_p(\sigma_{|\Lambda_n}^{-n-1,-n})}\right)\left[\Gamma_{n+1}(-n-1,\sigma^{-n-1,-n})-\Gamma_{n+1}(-n-1,\sigma)\right]\label{e:Bn3}\\
	&-\sum_{\sigma\in\Omega_{\Lambda_{n+1}}}\log\left(\frac{\nu(\sigma_{|\Lambda_n}^{n,n+1})\mu_p(\sigma_{|\Lambda_n})}{\nu(\sigma_{|\Lambda_n})\mu_p(\sigma_{|\Lambda_n}^{n,n+1})}\right)\left[\Gamma_{n+1}(n,\sigma^{n,n+1})-\Gamma_{n+1}(n,\sigma)\right].\label{e:Bn4}
	\end{align}
	Let us bound \eqref{e:Bn1} and \eqref{e:Bn4}. $\eqref{e:Bn2}$ and $\eqref{e:Bn3}$ are bounded in similar ways. 
	\begin{align}
	\eqref{e:Bn1}=&-\sum_{\sigma\in\Omega_{\Lambda_n}}\log\left(\frac{\nu(\sigma^{-n,-n+1})}{\nu(\sigma)}\right)\left[\Gamma_n(-n,\sigma^{-n,-n+1})-\Gamma_n(-n,\sigma)\right]\\
	\overset{\eqref{e:Gammaadditive}}{=}&-\sum_{\sigma\in\Omega_{\Lambda_{n}}}\sum_{\zeta\in\Omega_{\Lambda_{n+1}},\zeta_{|\Lambda_n}=\sigma}\log\left(\frac{\nu(\sigma^{-n,-n+1})}{\nu(\sigma)}\right)\left[\Gamma_{n+1}(-n,\zeta^{-n,-n+1})-\Gamma_{n+1}(-n,\zeta)\right]\\
	=&\sum_{\sigma\in\Omega_{\Lambda_{n}}}\sum_{\zeta\in\Omega_{\Lambda_{n+1}},\zeta_{|\Lambda_n}=\sigma}\log\left(\frac{\nu(\sigma^{-n,-n+1})}{\nu(\sigma)}\right)c_{-n,-n+1}(\zeta)\left[\nu(\zeta)-\nu(\zeta^{-n,-n+1})\right].
	\end{align}
	Let us split the sum according to the value of $\frac{\nu(\sigma^{-n,-n+1})}{\nu(\sigma)}$. Let $K'\geq K\geq 1$. If $\frac{\nu(\sigma^{-n,-n+1})}{\nu(\sigma)}\in[K,K']$ and $\nu(\zeta^{-n,-n+1})\geq\nu(\zeta)$, we discard the corresponding term. If $\frac{\nu(\sigma^{-n,-n+1})}{\nu(\sigma)}\in[K,K']$ and $\nu(\zeta^{-n,-n+1})\leq\nu(\zeta)$, we discard $\nu(\zeta^{-n-n+1})$ and use $\sum_{\zeta\in\Omega_{\Lambda_{n+1}},\zeta_{\Lambda_n}=\sigma,\nu(\zeta^{-n,-n+1})\leq\nu(\zeta)}\nu(\zeta)\leq\nu(\sigma)$ to bound
	\[\sum_{\zeta\in\Omega_{\Lambda_{n+1}},\zeta_{|\Lambda_n}=\sigma}\log\left(\frac{\nu(\sigma^{-n,-n+1})}{\nu(\sigma)}\right)c_{-n,-n+1}(\zeta)\left[\nu(\zeta)-\nu(\zeta^{-n,-n+1})\right]\leq \frac{\log K'}{K}\nu(\sigma^{-n,-n+1}).\]
	Therefore, the sum over $\sigma$ such that $\frac{\nu(\sigma^{-n,-n+1})}{\nu(\sigma)}\in[K,K']$ is bounded by $\frac{\log K'}{K}$.
	
	Similarly, the sum over $\sigma$ such that $\frac{\nu(\sigma^{-n,-n+1})}{\nu(\sigma)}\in[\delta',\delta]$ with $0<\delta'<\delta\leq 1$ is bounded by $|\log(\delta')|\delta$. Let us now fix $K>1$ and define $K_k=Ke^k$, $\delta_k=K^{-1} e^{-k}$. We can deduce from the bounds we just established that
	\begin{align}
	\eqref{e:Bn1}\leq \log K\sum_{\zeta\in\Omega_{\Lambda_{n+1}}}c_{-n,-n+1}(\zeta)|\nu(\zeta)-\nu(\zeta^{-n,-n+1})|+2\sum_{k\geq 0}\frac{\log K +k+1}{Ke^k}<\infty.\label{e:boundK}
	\end{align}
	Let us now focus on \eqref{e:Bn4}. For $\sigma\in\Omega_{\Lambda_n}$, $\zeta\in\Omega_{\Lambda_{n+2}},\zeta_{|\Lambda_n}=\sigma$, denote $\sigma\underset{n}{\leftarrow}\zeta$ the configuration in $\Omega_{\Lambda_n}$ equal to $\sigma$ on $\Lambda_n\setminus\{n\}$ and $\zeta(n+1)$ on $n$.
	\begin{align}
	\eqref{e:Bn4}&=-\sum_{\sigma\in\Omega_{\Lambda_n}}\sum_{\zeta\in\Omega_{\Lambda_{n+2}},\zeta_{|\Lambda_n}=\sigma}\log\left(\frac{\nu(\sigma\underset{n}{\leftarrow}\zeta)\mu_p(\sigma)}{\nu(\sigma)\mu_p(\sigma\underset{n}{\leftarrow}\zeta)}\right)\left[\Gamma_{n+2}(n,\zeta^{n,n+1})-\Gamma_{n+2}(n,\zeta)\right]\\
	&=\sum_{\sigma\in\Omega_{\Lambda_n}}\sum_{\zeta\in\Omega_{\Lambda_{n+2}},\zeta_{|\Lambda_n}=\sigma}\left[\log\frac{\nu(\sigma\underset{n}{\leftarrow}\zeta)}{\nu(\sigma)}+\log\frac{\mu_p(\sigma)}{\mu_p(\sigma\underset{n}{\leftarrow}\zeta)}\right]
	c_{n,n+1}(\zeta)\left[\nu(\zeta)-\nu(\zeta^{n,n+1})\right].
	\end{align}
	$|\log\frac{\mu_p(\sigma)}{\mu_p(\sigma\underset{n}{\leftarrow}\zeta)}|$ is clearly  bounded by $\log\frac{p\vee (1-p)}{p\wedge(1-p)}$, which takes care of this extra term. Other than that, we perform almost the same analysis as for \eqref{e:Bn1}, but we further need to split the sum over $\zeta$ into $\zeta(n+1)=\sigma(n)$ and $\zeta(n+1)=1-\sigma(n)$. The first one does not contribute, and the second can be bounded by the analog of the middle term in \eqref{e:boundK}, and the conclusion $\eqref{e:Bn4}<\infty$ still holds.
	
	Let us collect that we have established 
	\begin{equation}
	\label{e:estimBn}
	B_n\leq 8\sum_{k\geq 0}\frac{\log K +k+1}{Ke^k}+ \log K\left[\beta_{n+1}(n-1)+\beta_{n+1}(-n)+\beta_{n+2}(n)+\beta_{n+2}(-n-1)\right],
	\end{equation}
	for any $K>1$, where $\beta_m(x)=\sum_{\zeta\in\Omega_{\Lambda_{m}}}c_{x,x+1}(\zeta)|\nu(\zeta)-\nu(\zeta^{x,x+1})|$ for $\{x,x+1\}\subset\Lambda_{m-1}$.
	
	We have shown that $(B_n)_n$, and therefore $(I_n)_n$, converges. Moreover, recall that $0\leq \alpha_{n+1}(-n)\leq -2(I_{n+1}-I_n)$. In particular, $\alpha_{n+1}(-n)$ vanishes as $n\rightarrow\infty$. Similarly, the same holds for $\alpha_{n+1}(n-1)$. It now remains to notice that the following analog of \cite[Lemma IV.5.8]{liggett} holds: for $x,m$ such that $\{x,x+1\}\subset\Lambda_{m}$,
	\begin{equation}
	\beta_{m+1}(x)\leq \sqrt{2\alpha_{m+1}(x)}.
	\end{equation}
	This relies on the fact that one can write $\beta_{m+1}(x)=M-m$, where $M=\sum_{\zeta\in\Omega_{\Lambda_m}}c_{x,x+1}(\zeta)\max(\nu(\zeta),\nu(\zeta^{x,x+1})),$ and $m$ is defined similarly with a $\min$ instead of a $\max$. One can then use $M-m\leq M\log(M/m)$, the subadditivity and homogeneity of $\Phi$, and the fact that $M\leq 2$ to conclude.
	Therefore the second term in \eqref{e:estimBn} vanishes. This concludes the proof in the case $\nu(\cE\cup\cE')>0$.
	
	Let us now assume that $\nu(\cF'_k)>0$ for some $k\geqslant 2$. Lemma~\ref{l:positive} then implies the following modified entropy is well-defined:
	\begin{equation}
	 \tilde H_n(\nu)=\sum_{\sigma\in\Omega_{\Lambda_n},|\sigma|\leqslant k}\nu(\sigma)\log\left[\frac{\nu(\sigma)}{\mu_\rho(\sigma)}\right].
	\end{equation}
	The rest of the proof can be carried as above, with minor points of attention: 
	\begin{itemize}
	 \item one needs to define $\Phi(0,0)=0$; this preserves non-negativity of $\Phi$, positivity outside $\{(u,u),u\in\R_+\}$, convexity, homogeneity and subadditivity.
	 \item with this convention, the sums over $\zeta$ appearing in the proof can be taken as sums over $\zeta$ with $|\zeta|\leqslant k$.
	\end{itemize}
	
	The case $\nu(\cF''_k)>0$ for some $k\geqslant 1$ is similar.
 
\bibliographystyle{plain} 
\bibliography{stationary-measures}
\end{document}